\documentclass[reqno,12pt,a4paper]{amsart}

\usepackage[utf8]{inputenc}

\usepackage{enumerate}
\usepackage{amstext,amsmath,amsthm,amsfonts,amssymb,amscd}
\usepackage{latexsym,mathrsfs,dsfont,euscript}
\usepackage{mathbbol}
\usepackage{mathtools}
\usepackage{xcolor}

\usepackage{fullpage}

\usepackage{hyperref} 
\hypersetup{
	colorlinks,%
	citecolor=blue,%
	filecolor=red,%
	linkcolor=red,%
	urlcolor=blue
}

\newtheorem{theorem}{Theorem}[section]
\newtheorem{proposition}[theorem]{Proposition}
\newtheorem{lemma}[theorem]{Lemma}

\theoremstyle{definition}

\theoremstyle{remark}

\numberwithin{equation}{section}

\newcommand{\abs}[1]{\left\vert#1\right\vert}

\newcommand{\norm}[1]{\left\Vert#1\right\Vert}

\allowdisplaybreaks

\begin{document}
	
%%%%%%%%%%%%% Metadata %%%%%%%%%%%%%%%%%%%%%%%%%%%%%%%%%%%
%% Title
\title[Multilinear approximate identities generated by hypermetrics on spaces of homogeneous type]{Multilinear approximate identities generated by hypermetrics on spaces of homogeneous type}

%%    Information for authors

\author[]{Hugo Aimar}
%\email{haimar@santafe-conicet.gov.ar}
%%%
\author[]{Ivana G\'{o}mez}
%\email{ivanagomez@santafe-conicet.gov.ar}
%%%
\author[]{Joaqu\'in Toledo}
%\email{joaquintoledo@santafe-conicet.gov.ar}
%%%
	
%%  Palabras claves y codigos

\subjclass[2020]{Primary 42B25. Secondary 47H60; 42B20.}

\keywords{Maximal operators; approximate identities; multilinear integral operators; hypermetrics; space of homogeneous type}
%
%
%%%%%%%%%%%%%%%%%%%%%%%%%%%%%%%%%%%%%%%%%
\begin{abstract}
The classical Newtonian potentials, defined in terms of metrics, give rise to the basic family of kernels defining linear integral operators and posing the fundamental problems of linear harmonic analysis. When the binary character of a metric on a set is naturally generalized to the $(k+1)$-ary character of hypermetric on the set, we obtain families of kernels of $k+1$ variables leading to multilinear integral operators of order $k$ or $k$-linear operators. In this paper we consider the problem of multilinear approximation to the multilinear identity through potentials built on hypermetrics in the general setting of spaces of homogeneous type.
\end{abstract}
%%%%%%%%%%%%%%%%%%%%%%%%%%%%%%%%%%%%%%%%%%

\maketitle

%%%%%%%%%%%%%%%%%%%%%%%%%%%%%%%%%%%%%%%%%%%%%
\section{Introduction}\label{sec:introduction}
In this paper we shall deal with special multilinear operators defined by kernels built on higher order metrics or hypermetrics. A simple observation that can be taken as starting point to illustrate the basic definitions is the following. The usual distance in the real numbers $\mathbb{R}$, given by $|x-y|$ can be seen as a constant times the distance in $\mathbb{R}^2$ of the point $(x,y)$ to the diagonal of $\mathbb{R}^2$. In fact given $x\neq y$ in $\mathbb{R}$, the orthogonal projection of $(x,y)$ on the diagonal $\{(z,z)\in \mathbb{R}^2: z\in \mathbb{R}\}=\bigtriangleup_2$ is the point $\frac{1}{2}(x+y,x+y)$. Hence if $\rho(x,y)$ denotes the distance of the point $(x,y)$ to $\bigtriangleup_2$, we have that $\rho^2(x,y)=\left(x-\frac{(x+y)}{2}\right)^2+\left(y-\frac{(x+y)}{2}\right)^2=\frac{1}{2}|x-y|^2$. In other words $|x-y|=\sqrt{2}\rho(x,y)$. If we have $m\geq 2$ real numbers $\{x_1,\ldots, x_m\}$ we can define in $\mathbb{R}^m$ a function $\rho(x_1,\ldots,x_m)$ as the distance of the point $(x_1,\ldots, x_m)$ to the diagonal $\bigtriangleup_m=\{(z,\ldots,z): z\in \mathbb{R}\}$ in $\mathbb{R}^m$. This procedure can be carried over the setting of metric or quasi-metric spaces. In this paper we aim to explore some of the basic multilinear operators generated by these hypermetrics on spaces of homogeneous type.

We shall consider here the problem of approximating the $k$-linear identity $I(f_1,\ldots,\allowbreak f_k)(x)=\prod_{i=1}^k f_i(x)$ (see \cite{GrafakosBook2014}), through $k$-linear integral operators whose kernels are given in terms of $\rho(x,x_1,\ldots,x_k)$ on a space of homogeneous type $(X,d,\mu)$, with $m=k+1$.  
In the special case of $\alpha$-Ahlfors regular spaces of homogeneous type, we give sufficient conditions on $\varphi$ in order to prove that
\begin{equation}\label{eq:multilinearoperatorphimeans}
	\frac{1}{J(x,\varepsilon)}\int\limits_{X^k}\varphi\left(\frac{\rho(x,x_1,\ldots,x_k)}{\varepsilon}\right) \prod_{i=1}^k f_i(x_i) d\mu(x_1)\ldots d\mu(x_k)
\end{equation}
tends to $\prod_{i=1}^k f_i(x)$ for almost every $x\in X$, when $f_i\in L^{p_i}(X,\mu)$ and $1=\sum_{i=1}^k \frac{1}{p_i}$, $1<p_i\leq\infty$, where $J(x,\varepsilon)=\int_{X^k}\varphi\bigl(\frac{\rho(x,x_1,\ldots,x_k)}{\varepsilon}\bigr) d\mu(x_1)\ldots d\mu(x_k)$ (see Theorem~\ref{thm:ApproximationIdentitySpaceLebesgue} for the proof of the convergence of \eqref{eq:multilinearoperatorphimeans} to $\prod_{i=1}^k f_i(x)$). When $k=1$, we have the classical linear case, see for example \cite{SteinBook1970}. The problem of boundedness of multilinear maximal functions has been considered for different maximal type operators including the Calder\'{o}n bilinear maximal. See for example \cite{Lacey2000}, references therein and \cite{DeTaThi2008}.
	
The paper is organized as follows. Section~\ref{sec:hypermetrics} is devoted to the definition and basic properties of hypermetrics in spaces of homogeneous type. In Section~\ref{sec:MultilinearMaximalEstimates} we prove that, under suitable conditions on $\varphi$, the maximal operator associated to \eqref{eq:multilinearoperatorphimeans} is pointwise bounded by a multilinear Hardy-Littlewood type maximal operator on the sections of $\rho$-tubes about the diagonal of $X^{k+1}$. We also prove in Section~\ref{sec:MultilinearMaximalEstimates} the boundedness properties of this Hardy-Littlewood type operator in the general setting of spaces of homogeneous type. Section~\ref{sec:TheApproximationtotheMultilinearIdentity} deals with the convergence of the means \eqref{eq:multilinearoperatorphimeans} to the multilinear identity when $\varepsilon$ tends to zero.

%%%%%%%%%%%%%%%%%%%%%%%%%%%%%%%%%%%%%%%%%%%%%%%%%%%%%%%%%%%%%%

\section{Hypermetrics}\label{sec:hypermetrics}

Let us recall that a quasi-metric space $(X,d)$ is a set $X$ with a function $d: X\times X \to \mathbb{R}_{\geq 0}$ that is symmetric, vanishing on the diagonal of $X\times X$, and only on the diagonal, satisfying the generalized triangle inequality $d(x,z)\leq \kappa(d(x,y)+d(y,z))$ for some constant $\kappa\geq 1$ and every $x, y$ and $z$ in $X$. When $\kappa=1$, $(X, d)$ is a metric space. Actually as it was proved in \cite{MaSe79Lipschitz} and \cite{Gus74}, every quasi-metric space is metrizable and moreover, the quasi-metric $d$ is equivalent to a power of a metric. We shall write $B_d(x,r)$ to denote the ball centered at $x$ with radius $r>0$, i.e. $B_d(x,r)=\{y\in X: d(x,y)<r\}$.

Given a quasi-metric space $(X,d)$ and a positive integer $k$ the space $(X^{k+1}, d^{k+1})$, where $X^{k+1}=X\times \cdots \times X$, $k+1$ times, and $d^{k+1}(\boldsymbol{x},\boldsymbol{y})=\sup\{d(x_i,y_i): i=0,1,\ldots,k\}$, with $\boldsymbol{x}=(x_0,x_1,\ldots,x_k)$ and $\boldsymbol{y}=(y_0,y_1,\ldots,y_k)$, is also a quasi-metric space with the same triangle constant $\kappa$ as $(X,d)$. Set $\bigtriangleup_{k+1}$ to denote the diagonal of $X^{k+1}$. Precisely $\bigtriangleup_{k+1}=\{\boldsymbol{x} \in X^{k+1}: x_0=x_1=\ldots=x_k\}$. As usual the distance of a point $\boldsymbol{x}$ to a set $E$ is defined as the infimum of the distances of the point $\boldsymbol{x}$ to the points $\boldsymbol{y} \in E$. We shall say that the function $\rho : X^{k+1} \to \mathbb{R}_{\geq 0}$ given by $\rho(\boldsymbol{x})=\rho(x_0,\ldots,x_k)=d^{k+1}(\boldsymbol{x},\bigtriangleup_{k+1})=\inf_{\boldsymbol{y}\in \bigtriangleup_{k+1}}d^{k+1}(\boldsymbol{x},\boldsymbol{y})$ is the \textbf{hypermetric of order $k+1$ on $X$ induced by $d$}.

In our analysis of multilinear operators defined by $\rho$ we shall be specially interested in the neighborhoods of the diagonal $\bigtriangleup_{k+1}$ of the type $V(r)=\{\boldsymbol{x}\in X^{k+1}: \rho(\boldsymbol{x})<r\}$. In particular we shall deal with its sections obtained by fixing one of the components of $\boldsymbol{x}$. Assume for example that we fix $x\in X$ and consider the subset of $X^{k}$ given by $E(x,r)=\{(x_1,\ldots,x_k)\in X^{k}:\rho(x,x_1,\ldots, x_k)<r\}$ which is the section at $x$ of $V(r)$. Notice that since $\rho(x_0,x_1,\ldots, x_k)=\rho(x_{\sigma(0)},x_{\sigma(1)},\ldots, x_{\sigma(k)})$ for every permutation $\sigma$ 
of $\{0,1,\ldots,k\}$, we have that $E(x,r)=E(x_i,r)$ for every choice of $i=1,\ldots,k$. The balls in $(X,d)$ have a quantitative control of the sets $E(x,r)$ that is given in the next result. 

\begin{lemma}\label{lemma:sectionesentrebolas}
Let $\kappa$ be the triangular constant for $d$, then 
\begin{align*}
\prod_{i=1}^{k}	B_{d}(x,r)\subset E(x,r)\subset \prod_{i=1}^{k} B_{d}(x,2\kappa r)
\end{align*}
for every $x\in X$ and every $r>0$.
\end{lemma}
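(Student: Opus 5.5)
Both inclusions follow directly from the definition of $\rho$ as the $d^{k+1}$-distance of $(x,x_1,\ldots,x_k)$ to the diagonal $\bigtriangleup_{k+1}$. We test against the particular diagonal point $(x,\ldots,x)$ for the first inclusion, and use an almost-minimizing diagonal point together with the quasi-triangle inequality for the second.

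\emph{First inclusion.} Take $(x_1,\ldots,x_k)\in\prod_{i=1}^k B_d(x,r)$, so $d(x,x_i)<r$ for every $i$. Choosing $\boldsymbol{y}=(x,x,\ldots,x)\in\bigtriangleup_{k+1}$ gives
\[
\rho(x,x_1,\ldots,x_k)\le d^{k+1}\bigl((x,x_1,\ldots,x_k),\boldsymbol{y}\bigr)=\max\Bigl\{d(x,x),\max_{1\le i\le k} d(x_i,x)\Bigr\}=\max_{1\le i\le k} d(x_i,x)<r,
\]
where $d(x,x)=0$ and the maximum over finitely many terms is strictly below $r$. Hence $(x_1,\ldots,x_k)\in E(x,r)$.

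\emph{Second inclusion.} Let $\rho(x,x_1,\ldots,x_k)<r$. By the definition of the infimum there is some $z\in X$ with $d^{k+1}\bigl((x,x_1,\ldots,x_k),(z,\ldots,z)\bigr)<r$. This means $d(x,z)<r$ and $d(x_i,z)<r$ for each $i=1,\ldots,k$. Using symmetry of $d$ and the quasi-triangle inequality,
\[
d(x,x_i)\le\kappa\bigl(d(x,z)+d(z,x_i)\bigr)<2\kappa r ,
\]
so $x_i\in B_d(x,2\kappa r)$ for all $i$.

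The only point needing care is the strictness of the inequalities. Since $E(x,r)$ is defined by the strict inequality $\rho<r$, we can always pick $z$ with strict inequalities, so no limiting argument is needed.
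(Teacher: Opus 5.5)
Your proof is correct and is essentially the paper's own argument. The first inclusion compares with the diagonal point $(x,\ldots,x)$. The second picks a diagonal point $(u,\ldots,u)$ at $d^{k+1}$-distance less than $r$, which exists because the infimum is strictly below $r$, and then applies the quasi-triangle inequality to get $d(x,x_i)<2\kappa r$.
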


\begin{proof}
	Take $(x_1,\ldots, x_k)\in \prod_{i=1}^{k}	B_{d}(x,r)$, then  $d(x,x_i)<r$, $i=1,\ldots,k$. And, of course $d(x,x)=0<r$. So that $\rho(x,x_1,\ldots, x_k)\leq d^{k+1}\left((x,x_1,\ldots, x_k), (x,x,\ldots, x)\right)=\sup_{i=1,\ldots,k}d(x,x_i)<r$, and $(x_1, x_2,\ldots,x_k)\in E(x,r)$. Let us now check the second inclusion. For $(x_1,\ldots, x_k)\in E(x,r)$ we have $\rho(x,x_1,\ldots, x_k)<r$, hence for some $u \in X$, $d^{k+1}\left((x,x_1,\ldots, x_k),(u,u,\ldots,u)\right)<r$. From the definition of $d^{k+1}$ we get $d(x,u)<r, d(x_1,u)<r, \ldots, d(x_k,u)<r$. Now, from the triangle inequality, $d(x_i,x)\leq \kappa(d(x_i,u)+ d(u,x))<2\kappa r$, for $i=1,\ldots,k$. These inequalities prove that $(x_1, x_2, \ldots, x_k)$ belongs to $\prod_{i=1}^{k} B_{d}(x,2\kappa r)$.
\end{proof}

As usual, see \cite{CoiWe1971,MaSe79Lipschitz} we shall say that $(X,d,\mu)$ is a space of homogeneous type if $(X,d)$ is a quasi-metric space and $(X,\mu)$ is a positive measure space on a $\sigma$-algebra containing the $d$-balls, such that there exists a constant $A$ for which $0<\mu(B_d(x,2r))\leq A\mu(B_d(x,r))<\infty$ for every $x\in X$ and every $r>0$. We say that $\kappa$ and $A$ are the geometric constants of $(X,d,\mu)$.

In a space of homogeneous type the above lemma provides estimates for the measures of the sections $E(x,r)$ of $V(r)$. The results in \cite{MaSe79Lipschitz} show that, if necessary, the quasi-metric can be changed by another quasi-metric that is continuous. Hence the $d$-balls are open in $X$, the tubes $V(r)$ are open sets in $X^{k+1}$ and their sections $E(x,r)$ are open sets in $X^k$. 

The next result follows immediately from Lemma~\ref{lemma:sectionesentrebolas}.
\begin{lemma}
	If $(X,d,\mu)$ is a space of homogeneous type such that the $d$-balls are open sets and $k$ is a positive integer, then $(X^{k},d^{k},\mu^{k})$, with $\mu^k$ the product measure on $X^k$, is a space of homogeneous type with geometric constants $\kappa$ and $A^k$. Moreover, the family $E(x,r)$ satisfies a doubling property $0<\mu^{k}\left(E(x,2r)\right)\leq \widetilde{A}\mu^{k}\left(E(x,r)\right)<\infty$, with $\widetilde{A}=(8\kappa)^{k\log_2A}$.
\end{lemma}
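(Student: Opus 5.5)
First I will check that $(X^k,d^k,\mu^k)$ is a space of homogeneous type. The function $d^k$ is symmetric and vanishes exactly on the diagonal of $X^k\times X^k$, and for each coordinate $d(x_i,z_i)\le\kappa(d(x_i,y_i)+d(y_i,z_i))\le\kappa(d^k(\boldsymbol{x},\boldsymbol{y})+d^k(\boldsymbol{y},\boldsymbol{z}))$. Taking the supremum over $i$ gives the triangle inequality with the same constant $\kappa$. The key observation is that $d^k$-balls are products of $d$-balls:
\[
B_{d^k}(\boldsymbol{x},r)=\prod_{i=1}^k B_d(x_i,r).
\]
Since the $d$-balls are open, they are measurable, and so these product sets lie in the product $\sigma$-algebra. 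Fubini (Tonelli) for rectangles then gives $\mu^k(B_{d^k}(\boldsymbol{x},r))=\prod_{i=1}^k\mu(B_d(x_i,r))$. Each factor is positive and finite, and applying the doubling of $\mu$ factor by factor yields
\[
0<\mu^k(B_{d^k}(\boldsymbol{x},2r))\le A^k\mu^k(B_{d^k}(\boldsymbol{x},r))<\infty,
\]
so the geometric constants are $\kappa$ and $A^k$.

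For the doubling of the sections, I will squeeze with Lemma~\ref{lemma:sectionesentrebolas}. Write $\Delta_x=(x,\dots,x)\in X^k$, so that $\prod_{i=1}^k B_d(x,s)=B_{d^k}(\Delta_x,s)$.
\begin{itemize}
\item \emph{Measurability.} $E(x,r)$ is open by the remark preceding the lemma, hence measurable.
\item \emph{Positivity.} Positivity follows from $E(x,r)\supset B_{d^k}(\Delta_x,r)$.
\item \emph{Finiteness.} Finiteness follows from $E(x,r)\subset B_{d^k}(\Delta_x,2\kappa r)$.
\end{itemize}
For the doubling inequality itself I will chain
\[
\mu^k(E(x,2r))\le\mu^k\bigl(B_{d^k}(\Delta_x,4\kappa r)\bigr)=\prod_{i=1}^k\mu(B_d(x,4\kappa r)).
\]
I then compare each factor with $\mu(B_d(x,r))$ by iterating the doubling of $\mu$ $N$ times, where $2^N\ge4\kappa$. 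This gives $\mu(B_d(x,4\kappa r))\le A^N\mu(B_d(x,r))$, and hence
\[
\mu^k(E(x,2r))\le A^{kN}\mu^k\bigl(B_{d^k}(\Delta_x,r)\bigr)\le A^{kN}\mu^k(E(x,r)).
\]

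The only delicate step is the bookkeeping that produces the stated constant $\widetilde A=(8\kappa)^{k\log_2A}$. I will take $N=\lceil\log_2(4\kappa)\rceil\le\log_2(4\kappa)+1=\log_2(8\kappa)$. Since $A\ge1$, this gives
\[
A^N\le A^{\log_2(8\kappa)}=(8\kappa)^{\log_2A},
\]
using the identity $a^{\log_2 b}=b^{\log_2 a}$. Raising to the $k$-th power gives exactly $\widetilde A$.
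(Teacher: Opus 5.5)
Your proposal is correct and follows essentially the same route as the paper. Both arguments sandwich $E(x,2r)$ and $E(x,r)$ between products of $d$-balls via Lemma~\ref{lemma:sectionesentrebolas}, iterate the doubling of $\mu$ at most $\log_2(8\kappa)$ times, and use $A^{\log_2(8\kappa)}=(8\kappa)^{\log_2 A}$; the paper takes $j+1$ steps with $2^j\le 4\kappa<2^{j+1}$, which gives the same bound as your $N=\lceil\log_2(4\kappa)\rceil$, and your check that $d^k$-balls are products of $d$-balls just spells out the first claim that the paper says follows readily.
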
	
\begin{proof}
The first claim follows readily from the fact that $(X,d,\mu)$ is a space of homogeneous type and from the definition of $d^k$. In order to check the second claim, notice first that the positivity and finiteness of the measure $\mu^k$ of the sections $E(x,r)$ follow from the same properties for the $d$-balls and Lemma~\ref{lemma:sectionesentrebolas}. Actually, the doubling property for the sections follows also from the quantitative consequences of Lemma~\ref{lemma:sectionesentrebolas}. In fact, since for $j\in\mathbb{N}$ such that $2^j\leq 4\kappa<2^{j+1}$ we have $\mu(B_d(x,4\kappa r))\leq A^{j+1}\mu(B_d(x,r))\leq A A^{\log_2 4\kappa}\mu(B_d(x,r))=2^{\log_2A}2^{\log_2 A \cdot\log_2 4\kappa}\mu(B_d(x,r))=(8\kappa)^{\log_2 A}\mu(B_d(x,r))$, then
\begin{equation*}
\mu^k(E(x,2r))\leq\prod_{i=1}^k\mu(B_d(x,4\kappa r)) \leq (8\kappa)^{k\log_2 A}\mu^k\left(\prod_{i=1}^k B_d(x,r)\right)\leq (8\kappa)^{k\log_2 A}\mu^k\left(E(x,r)\right).
\end{equation*}
\end{proof}
When the space of homogeneous type is $\alpha$-Ahlfors regular (see \cite{David1984}), for some positive $\alpha$, we have a uniform behavior for the measure of the sections $E(x,r)$. Recall that $(X,d,\mu)$ is $\alpha$-Ahlfors regular if there exist constants $0<\gamma\leq \Gamma<\infty$ such that
	\begin{align*}
		\gamma r^\alpha\leq \mu \left(B_d(x,r)\right)\leq \Gamma r^\alpha
	\end{align*}
for every $x\in X$ and every $r>0$. Notice that every $\alpha$-Ahlfors regular space is a space of homogeneous type with $A=2^\alpha\tfrac{\Gamma}{\gamma}$. From Lemma~\ref{lemma:sectionesentrebolas} we immediately have the next result.		
\begin{lemma}\label{lemma:measureSectionsEonAhlfors}
	If $(X,d,\mu)$ is $\alpha$-Ahlfors regular, then $\mu^k\left(E(x,r)\right)\simeq r^{k\alpha}$
	for every $x\in X$ and every $r>0$. Precisely, $c_1r^{k\alpha}\leq\mu^k(E(x,r))\leq c_2 r^{k\alpha}$ with $c_1=\gamma^k$ and $c_2=(2\kappa)^{\alpha k}\Gamma^k$.
\end{lemma}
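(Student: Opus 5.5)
The plan is to sandwich $E(x,r)$ between products of balls using Lemma~\ref{lemma:sectionesentrebolas}, and then compute the product measure of each product of balls with the Ahlfors regularity bounds. Since $\mu^k$ is the product measure, for any measurable sets $A_1,\ldots,A_k\subset X$ we have $\mu^k\bigl(\prod_{i=1}^k A_i\bigr)=\prod_{i=1}^k\mu(A_i)$. The only other ingredient is monotonicity of $\mu^k$.

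For the lower bound, take the first inclusion $\prod_{i=1}^k B_d(x,r)\subset E(x,r)$ of Lemma~\ref{lemma:sectionesentrebolas}. Monotonicity and the product structure give
\[
\mu^k(E(x,r))\ \ge\ \prod_{i=1}^k \mu(B_d(x,r))\ \ge\ (\gamma r^\alpha)^k=\gamma^k r^{k\alpha},
\]
where the last step uses the lower Ahlfors bound $\mu(B_d(x,r))\ge\gamma r^\alpha$. This gives $c_1=\gamma^k$.

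For the upper bound, use the second inclusion $E(x,r)\subset\prod_{i=1}^k B_d(x,2\kappa r)$. Together with the upper Ahlfors bound at radius $2\kappa r$ this yields
\[
\mu^k(E(x,r))\ \le\ \prod_{i=1}^k\Gamma(2\kappa r)^\alpha=(2\kappa)^{\alpha k}\Gamma^k r^{k\alpha},
\]
which gives $c_2=(2\kappa)^{\alpha k}\Gamma^k$.

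There is no real obstacle here. The only point to check is measurability of $E(x,r)$ in $X^k$, so that $\mu^k(E(x,r))$ makes sense. This holds because, as remarked before the lemma, one may take $d$ continuous, so the sections $E(x,r)$ are open sets of $X^k$. Alternatively, one could read the bounds as bounds on the inner and outer measure.
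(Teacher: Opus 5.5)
Your proposal is correct and takes the same approach as the paper, which derives this lemma immediately from Lemma~\ref{lemma:sectionesentrebolas}. You sandwich $E(x,r)$ between $\prod_{i=1}^k B_d(x,r)$ and $\prod_{i=1}^k B_d(x,2\kappa r)$, then apply the Ahlfors bounds to each factor of the product measure, which gives exactly $c_1=\gamma^k$ and $c_2=(2\kappa)^{\alpha k}\Gamma^k$.
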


%%%%%%%%%%%%%%%%%%%%%%%%%%%%%%%%%%%%%%%%
	
\section{Multilinear maximal estimates}\label{sec:MultilinearMaximalEstimates}
In this section $(X,d,\mu)$ is an $\alpha$-Ahlfors regular space which is complete in the metric sense. Some of the results will be proved under more general hypotheses that we shall be explicitly point out.

Given $k$-functions $f_i:X\to\mathbb{R}$, the $k$-linear identity operator is defined on the vector function $(f_1,f_2,\ldots,f_k)$ by
\begin{equation*}
	I(f_1,f_2,\ldots,f_k)(x)=\prod_{i=1}^k f_i(x).
\end{equation*}
From H\"{o}lder inequality we have that $\norm{I(f_1,f_2,\ldots,f_k)}_{L^1(X,\mu)}\leq \prod_{i=1}^k\norm{f_i}_{L^{p_i}(X,\mu)}$ whenever $1=\sum_{i=1}^k\tfrac{1}{p_i}$. In the search of approximations to $I$, through $k$-linear integral operators concentrating their singularities about the diagonal $\triangle_{k+1}$ ok $X^{k+1}$, we consider kernels of the form
\begin{equation*}
	\Phi_r(x,x_1,\ldots,x_k)=\frac{1}{J(x,r)}\varphi\left(\frac{\rho(x,x_1,\ldots,x_k)}{r}\right),
\end{equation*}
for $r>0$, $x\in X$ and $(x_1,\ldots,x_k)\in X^k$, where
\begin{equation*}
	J(x,r)=\int_{X^k}\varphi\left(\frac{\rho(x,x_1,\ldots,x_k)}{r}\right) d^k\mu(x_1,\ldots,x_k).
\end{equation*}
The next result provides sufficient conditions on $\varphi$ in order to get an estimate for $J(x,r)$ in terms of $r>0$ uniformly in $x\in X$.
\begin{lemma}\label{lemma:JcomparableRadio}
	Let $\varphi:\mathbb{R}_{\geq 0}\to \mathbb{R}_{\geq 0}$ be a nonincreasing function such that 
	\begin{equation*}
		0<s(\varphi)=\int_0^\infty \varphi(t)t^{k\alpha-1}dt<\infty.
	\end{equation*} 
	Then there exist two constants $C_1$ and $C_2$ depending only on $k$, $\gamma$, $\Gamma$ and $\kappa$ such that
	\begin{equation*}
		C_1s(\varphi)\leq\frac{J(x,r)}{r^{k\alpha}}\leq C_2 s(\varphi)
	\end{equation*}
	for every $r>0$ and every $x\in X$.
\end{lemma}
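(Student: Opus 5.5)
We will use the layer-cake (distribution function) representation of $J(x,r)$ together with Lemma~\ref{lemma:measureSectionsEonAhlfors}. Since $\varphi$ is nonnegative and nonincreasing, for each $t>0$ the superlevel set $\{s\geq 0:\varphi(s)>t\}$ is an interval starting at $0$. It is either $[0,a(t))$ or $[0,a(t)]$, where $a(t)=\sup\{s:\varphi(s)>t\}\in[0,\infty]$.

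First we write
\begin{equation*}
J(x,r)=\int_0^\infty \mu^k\bigl(\{(x_1,\ldots,x_k):\varphi(\rho(x,x_1,\ldots,x_k)/r)>t\}\bigr)\,dt .
\end{equation*}
The set inside is sandwiched between $E(x,ra(t))$ and $\bigcap_{\epsilon>0}E(x,r(a(t)+\epsilon))$. By Lemma~\ref{lemma:measureSectionsEonAhlfors}, its measure lies between $\gamma^k r^{k\alpha}a(t)^{k\alpha}$ and $(2\kappa)^{\alpha k}\Gamma^k r^{k\alpha}a(t)^{k\alpha}$. For the upper bound we use continuity from above, or simply the upper estimate at radius $a(t)+\epsilon$ followed by letting $\epsilon\to0$. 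The case $a(t)=\infty$ is harmless: the finiteness of $s(\varphi)$ forces $\varphi(s)\to0$, so $a(t)<\infty$ for every $t>0$.

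This gives
\begin{equation*}
\gamma^k\int_0^\infty a(t)^{k\alpha}\,dt\;\leq\;\frac{J(x,r)}{r^{k\alpha}}\;\leq\;(2\kappa)^{\alpha k}\Gamma^k\int_0^\infty a(t)^{k\alpha}\,dt .
\end{equation*}
We then identify the common integral with $s(\varphi)$ up to a constant. Note that $a(t)^{k\alpha}=k\alpha\int_0^\infty \mathbf 1_{\{s<a(t)\}}s^{k\alpha-1}\,ds$, and that $\{s<a(t)\}$ coincides with $\{\varphi(s)>t\}$ up to the single endpoint $s=a(t)$. Applying Tonelli gives
\begin{equation*}
\int_0^\infty a(t)^{k\alpha}\,dt=k\alpha\int_0^\infty\Bigl(\int_0^\infty \mathbf 1_{\{\varphi(s)>t\}}\,dt\Bigr)s^{k\alpha-1}\,ds=k\alpha\, s(\varphi).
\end{equation*}
To justify this step, for fixed $t$ the endpoint is a single point, which is null in $s$; so the set $\{(s,t):s<a(t)\}$ and the set $\{(s,t):\varphi(s)>t\}$ differ by a null set in $\mathbb{R}_{>0}^2$. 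Both sets are measurable because $\varphi$ is monotone, hence Borel.

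The conclusion follows with $C_1=k\alpha\gamma^k$ and $C_2=k\alpha(2\kappa)^{\alpha k}\Gamma^k$. These constants also depend on $\alpha$, which is implicit in the Ahlfors setting. The main point requiring care is the inclusion of the level sets in the sections $E(x,\cdot)$ at the endpoint $a(t)$. This is handled by the sandwich between open sections above, and, since the values involved are continuous in the radius, by the $\epsilon$-limit. No continuity of $\varphi$ is needed.
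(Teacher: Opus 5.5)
Your argument is correct, and it takes a genuinely different route from the paper.

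The paper decomposes $X^k$ into the $\lambda$-adic annuli $\{r\lambda^j\le\rho(x,x_1,\ldots,x_k)<r\lambda^{j+1}\}$. On each annulus it freezes $\varphi$ at an endpoint using monotonicity, and then compares the series $\sum_j\varphi(\lambda^j)\lambda^{k\alpha j}$ with $s(\varphi)$.

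\begin{itemize}
\item The upper bound only uses the upper estimate for $\mu^k(E(x,\cdot))$.
\item The lower bound needs a lower estimate for the measure of each annulus. Because the two-sided Ahlfors bounds have different constants, this forces the ratio to be large. The paper compares with products of balls via Lemma~\ref{lemma:sectionesentrebolas} and chooses $\lambda_0$ with $\gamma^k\lambda_0^{k\alpha}-\Gamma^k(2\kappa)^{k\alpha}=1$. This costs factors like $\lambda_0^{2k\alpha}\log\lambda_0$ in the constants.
\end{itemize}

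Your layer-cake representation avoids annuli entirely. Since $\varphi$ is nonincreasing, the superlevel sets of $(x_1,\ldots,x_k)\mapsto\varphi(\rho(x,x_1,\ldots,x_k)/r)$ are, up to their boundary, the sections $E(x,ra(t))$ themselves. As a result:
\begin{itemize}
\item only the two-sided estimate on sections from Lemma~\ref{lemma:measureSectionsEonAhlfors} is used;
\item no auxiliary $\lambda_0$ is needed;
\item you get the explicit constants $C_1=k\alpha\gamma^k$ and $C_2=k\alpha(2\kappa)^{k\alpha}\Gamma^k$, whose ratio is exactly $c_2/c_1$ from that lemma.
\end{itemize}

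The paper's discretization has the advantage that it is the same device reused in Theorem~\ref{thm:PointwiseEstimateforPhistarbyMaximalHLsections} and Lemma~\ref{lemma:ApproximationIdentityContinuousCompactlySupported}. Your method would work there too. Apply the layer-cake formula to the measure $\prod_i|f_i(x_i)|\,d\mu^k$, and bound the integral over each superlevel set by $\mu^k\bigl(E(x,\varepsilon(a(t)+\delta))\bigr)\,\mathcal{M}(f_1,\ldots,f_k)(x)$, letting $\delta\to0$. This gives $\Phi^*\le (C_2/C_1)\,\mathcal{M}$ directly.

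Your remark that the constants also depend on $\alpha$ is correct, and it applies equally to the paper's constants.
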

\begin{proof}
Take $x\in X$ and $r>0$, then for $\lambda>1$
\begin{align*}
	J(x,r)=\sum_{j\in \mathbb{Z}}\int_{r \lambda^{j}\leq \rho(x,x_1,\ldots,x_k)<r \lambda^{j+1}}\varphi\left(\frac{\rho(x,x_1,\ldots,x_k)}{r}\right)d^k\mu(x_1,\ldots,x_k). 
\end{align*}
Hence from the nonincreasing condition on $\varphi$ and Lemma~\ref{lemma:measureSectionsEonAhlfors}
\begin{align}
	J(x,r) &\leq \sum_{j\in \mathbb{Z}}\varphi\left(\lambda^{j}\right)\int_{r \lambda^{j}\leq \rho(x,x_1,\ldots,x_k)<r \lambda^{j+1}}d^k\mu(x_1,\ldots,x_k) \notag\\ \notag
	&\leq \sum_{j\in \mathbb{Z}}\varphi\left(\lambda^{j}\right) \mu^{k}\left(E(x,r \lambda^{j+1})\right)\\ \notag
	&\leq \Gamma^{k}(2\kappa)^{\alpha k}r^{\alpha k}\sum_{j\in \mathbb{Z}}\varphi\left(\lambda^{j}\right)\lambda^{k\alpha(j+1)}\\ \notag
	&=\frac{\lambda^{k\alpha}\Gamma^k (2\kappa)^{\alpha k}r^{\alpha k}}{\log \lambda}\sum_{j\in \mathbb{Z}}\int_{\lambda^{j-1}}^{\lambda^{j}}\varphi\left(\lambda^{j}\right) \lambda^{k\alpha j}\frac{dt}{t}\\ \notag
	&\leq \frac{\lambda^{2k\alpha}\Gamma^k (2\kappa)^{\alpha k}r^{\alpha k}}{\log \lambda}\sum_{j\in \mathbb{Z}}\int_{\lambda^{j-1}}^{\lambda^{j}}\varphi\left(t\right) t^{k\alpha-1}dt\\ 
	&=r^{k\alpha}\left(\frac{\lambda^{2k\alpha}}{\log \lambda}\right)\Gamma^k (2\kappa)^{k\alpha} s(\varphi).\label{eq:upperboundJ}
\end{align}
Let us consider now the lower bound for $J(x,r)$.		
From Lemma~\ref{lemma:sectionesentrebolas} we have that for $\lambda>2\kappa$,
\begin{equation*}
	E(x,r\lambda^{j+1})\setminus E(x,r\lambda^{j})\supset \prod_{i=1}^k B_d(x,r\lambda^{j+1})\setminus\prod_{i=1}^k B_d(x,2\kappa r\lambda^{j}).
\end{equation*}
Now, 
\begin{align*}
	J(x,r)&\geq \sum_{j\in \mathbb{Z}}\varphi\left(\lambda^{j+1}\right)\mu^k\left(E(x,r\lambda^{j+1})\setminus E(x,r\lambda^{j}) \right)\\
	& \geq \sum_{j\in \mathbb{Z}}\varphi\left(\lambda^{j+1}\right)\mu^k\left(\prod_{i=1}^k B_d(x,r\lambda^{j+1})\setminus\prod_{i=1}^k B_d(x,2\kappa r\lambda^{j})\right).
\end{align*}
From the Ahlfors character of the space,
\begin{align*}
\mu^k &\left(\prod_{i=1}^k B_d(x,r\lambda^{j+1})\setminus\prod_{i=1}^k B_d(x,2\kappa r\lambda^{j})\right)\\
&\phantom{\prod_{i=1}^k B_d(x,r\lambda^{j+1})\setminus} =\mu^k\left(\prod_{i=1}^k B_d(x,r\lambda^{j+1})\right)- \mu^k\left(\prod_{i=1}^k B_d(x,2\kappa r\lambda^{j})\right)\\
&\phantom{\prod_{i=1}^k B_d(x,r\lambda^{j+1})\setminus} \geq \gamma^k (r\lambda^{j+1})^{k\alpha}-\Gamma^k(2\kappa r\lambda^{j})^{k\alpha}\\
&\phantom{\prod_{i=1}^k B_d(x,r\lambda^{j+1})\setminus} =r^{k\alpha}\lambda^{k\alpha j}\left[\gamma^k\lambda^{k\alpha}-\Gamma^k 2^{k\alpha}\kappa^{k\alpha}\right].
\end{align*}
Hence, if we choose $\lambda_0=\dfrac{(1+\Gamma^{k}(2\kappa)^{k\alpha})^{\tfrac{1}{k\alpha}}}{\gamma^{\tfrac{1}{\alpha}}}$, we get
\begin{align*}
	J(x,r)&\geq r^{k\alpha}\sum_{j\in \mathbb{Z}}\varphi(\lambda_0^{j+1})\lambda_0^{k\alpha j}\\
	&\geq \frac{r^{k\alpha}}{(\lambda_0^{k\alpha})^2}\frac{1}{\log \lambda_0}\sum_{j\in \mathbb{Z}}\int_{\lambda_0^{j+1}}^{\lambda_0^{j+2}}\varphi(t)t^{k\alpha} \frac{dt}{t}\\
	&\geq \frac{r^{k\alpha}}{\lambda_0^{2k\alpha}\log \lambda_0} s(\varphi). 
\end{align*}
This inequality together with \eqref{eq:upperboundJ}, with $\lambda_0$ instead of $\lambda$, gives the result.
\end{proof}

Let us now introduce the $k$-sublinear maximal operator defined by the family $\{\Phi_\varepsilon: \varepsilon>0\}$ of kernels 
\begin{equation}\label{eq:MaximalOperatorfamilyPhi}
	\Phi^*(f_1,\ldots,f_k)(x)=\sup_{\varepsilon>0}\int_{X^k}\Phi_\varepsilon(x,x_1,\ldots,x_k)\prod_{i=1}^k |f_i(x_i)|d\mu^k(x_1,\ldots,x_k).
\end{equation}
It is not difficult to show, applying the generalized Schur Lemma in \cite{GraTo01}, that for fixed $\varepsilon>0$ the operator $\int_{X^k}\Phi_\varepsilon(x,x_1,\ldots,x_k)\prod_{i=1}^k |f_i(x_i)|d\mu^k(x_1,\ldots,x_k)$ is bounded from $L^{p_1}\times\ldots\times L^{p_k}$ into $L^q$ when $\sum_{i=1}^k\tfrac{1}{p_i}=\tfrac{1}{q}$ with $1<p_1,\ldots,p_k,q<\infty$.
The basic fact in the search of pointwise convergence of $\int_{X^k}\Phi_\varepsilon(x,x_1,\ldots,x_k)\prod_{i=1}^k |f_i(x_i)|d\mu^k$ is the pointwise boundedness of $\Phi^*(f_1,\ldots,f_k)(x)$ by a Hardy-Littlewood multilinear operator built on the sections $E(x,r)$ of the neighborhoods of the diagonal $\triangle_{k+1}$. Precisely, the adequate Hardy-Littlewood type maximal operator is defined by
\begin{equation}
	\mathcal{M}(f_1,\ldots,f_k)(x)=\sup_{r>0}\frac{1}{\mu^k(E(x,r))}\int_{E(x,r)}\prod_{i=1}^k |f_i(x_i)|d\mu^k(x_1,\ldots,x_k)
\end{equation}
for measurable functions $f_1,\ldots,f_k$ on $X$. 
Let us start by stating and proving an upper pointwise estimate of $\Phi^*$ by $\mathcal{M}$.
\begin{theorem}\label{thm:PointwiseEstimateforPhistarbyMaximalHLsections}
Let $(X,d,\mu)$ be an $\alpha$-Ahlfors regular space. Let $\varphi: \mathbb{R}_{\geq 0} \to \mathbb{R}_{\geq 0}$ be a nonincreasing function satisfying $0<s(\varphi)=\int_{0}^{\infty}\varphi(t)t^{k\alpha-1}dt<\infty$. Then there exists a constant $C$ depending only on $\kappa$, $\gamma$, $\Gamma$, $\alpha$ and $k$ such that
\begin{equation*}
	\Phi^*(f_1,\ldots,f_k)(x)\leq C\mathcal{M}(f_1,\ldots,f_k)(x)
\end{equation*}
for every $f_1,\ldots,f_k$ measurable on $X$ and for every $x\in X$.
\end{theorem}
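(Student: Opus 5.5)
We follow the classical argument for radially decreasing approximate identities, decomposing the domain of integration into the annular regions of the tube sections $E(x,\cdot)$, exactly as in the proof of Lemma~\ref{lemma:JcomparableRadio}.

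Fix $x\in X$ and $\varepsilon>0$, and set $F(x_1,\ldots,x_k)=\prod_{i=1}^k|f_i(x_i)|$. Choose $\lambda>1$, for instance $\lambda=2$ or the $\lambda_0$ of Lemma~\ref{lemma:JcomparableRadio}. Split $X^k$ into the shells
\[
S_j=\{(x_1,\ldots,x_k): \varepsilon\lambda^j\le \rho(x,x_1,\ldots,x_k)<\varepsilon\lambda^{j+1}\},\qquad j\in\mathbb{Z}.
\]
Points with $\rho(x,x_1,\ldots,x_k)=0$ form a null set, namely the diagonal point $(x,\ldots,x)$, or are absorbed into $\bigcap_j E(x,\varepsilon\lambda^{j})$. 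Either way the possible contribution $\varphi(0)F(x,\ldots,x)\mu^k(\{(x,\ldots,x)\})$ is bounded by $\varphi(0)\,\mu^k(E(x,r))\mathcal{M}(f_1,\ldots,f_k)(x)$ as $r\to0$. It therefore vanishes, since $\mu^k(E(x,r))\le c_2r^{k\alpha}\to 0$.

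On each shell we use that $\varphi$ is nonincreasing, so $\varphi(\rho/\varepsilon)\le\varphi(\lambda^j)$, and that $S_j\subset E(x,\varepsilon\lambda^{j+1})$. This gives
\[
\int_{X^k}\varphi\Bigl(\tfrac{\rho(x,x_1,\ldots,x_k)}{\varepsilon}\Bigr)F\,d\mu^k\le\sum_{j\in\mathbb{Z}}\varphi(\lambda^j)\,\mu^k\bigl(E(x,\varepsilon\lambda^{j+1})\bigr)\,\mathcal{M}(f_1,\ldots,f_k)(x).
\]
By Lemma~\ref{lemma:measureSectionsEonAhlfors}, $\mu^k(E(x,\varepsilon\lambda^{j+1}))\le c_2\varepsilon^{k\alpha}\lambda^{k\alpha(j+1)}$. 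The resulting sum $\sum_j\varphi(\lambda^j)\lambda^{k\alpha(j+1)}$ is exactly the one already controlled in \eqref{eq:upperboundJ}: since $\varphi$ is nonincreasing, $\varphi(\lambda^j)\le\varphi(t)$ for $t\in[\lambda^{j-1},\lambda^j]$, so the sum is at most $\frac{\lambda^{2k\alpha}}{\log\lambda}s(\varphi)$. Hence the numerator is at most
\[
C'\varepsilon^{k\alpha}s(\varphi)\,\mathcal{M}(f_1,\ldots,f_k)(x).
\]

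Finally we divide by $J(x,\varepsilon)$. The lower bound $J(x,\varepsilon)\ge C_1s(\varphi)\varepsilon^{k\alpha}$ of Lemma~\ref{lemma:JcomparableRadio} cancels both $s(\varphi)$ and $\varepsilon^{k\alpha}$, leaving $C=C'/C_1$. This constant depends only on $\kappa,\gamma,\Gamma,\alpha,k$ once $\lambda$ is fixed as a function of these. Taking the supremum over $\varepsilon>0$ gives the theorem.

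The only delicate step is the shell estimate. One must bound the integral over $S_j$ by the integral over the whole section $E(x,\varepsilon\lambda^{j+1})$, which is legitimate because $F\ge0$, and then by $\mu^k(E)\mathcal{M}$. One must also check that the sum over $j$ is absolutely controlled, which is precisely the computation already carried out in Lemma~\ref{lemma:JcomparableRadio}, so no new difficulty arises. If $\mathcal{M}(f_1,\ldots,f_k)(x)=\infty$ there is nothing to prove.
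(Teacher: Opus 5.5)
Your proposal is correct and follows essentially the same route as the paper. You decompose $X^k$ into shells $\varepsilon\lambda^j\le\rho(x,\cdot)<\varepsilon\lambda^{j+1}$ and use the monotonicity of $\varphi$. You pass to averages over the sections $E(x,\varepsilon\lambda^{j+1})$ via the upper bound of Lemma~\ref{lemma:measureSectionsEonAhlfors}, compare $\sum_j\varphi(\lambda^j)\lambda^{k\alpha j}$ with $s(\varphi)$, and divide by the lower bound for $J(x,\varepsilon)$ from Lemma~\ref{lemma:JcomparableRadio}. With $\lambda=2$ this gives exactly the paper's constant $C=\frac{(8\kappa)^{k\alpha}\Gamma^k}{C_1\log 2}$.

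Your extra observation that $\{\rho(x,\cdot)=0\}=\{(x,\ldots,x)\}$ is $\mu^k$-null, because $\mu(\{x\})\le\Gamma r^\alpha$ for every $r>0$, fills a small point the paper leaves implicit.
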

\begin{proof}
Given $\varepsilon>0$, as before we may decompose the domain of any of the integrals defining $\Phi^*$ in dyadic annuli to obtain by the monotonicity of the $\varphi$, Lemma~\ref{lemma:JcomparableRadio} and Lemma~\ref{lemma:measureSectionsEonAhlfors}, the following basic estimate,
\begin{align*}
&\frac{1}{J(x,\varepsilon)}\int_{X^k}\varphi\left(\frac{\rho(x,x_1,\ldots,x_k)}{\varepsilon}\right)\prod_{i=1}^k|f_i(x_i)|d^k\mu(x_1,\ldots,x_k)\\
&=\frac{1}{J(x,\varepsilon)}\sum_{j\in \mathbb{Z}}\int\limits_{\{(x_1,\ldots,x_k):\varepsilon 2^{j}\leq \rho(x,x_1,\ldots,x_k)<\varepsilon 2^{j+1}\}}\varphi\left(\frac{\rho(x,x_1,\ldots,x_k)}{\varepsilon}\right)\prod_{i=1}|f_i(x_i)|d^k\mu(x_1,\ldots,x_k)\\
&\leq \frac{1}{C_1s(\varphi)}\sum_{j\in \mathbb{Z}}\varphi\left(2^{j}\right)\frac{1}{\varepsilon^{k\alpha}}\int\limits_{\{(x_1,\ldots,x_k):\varepsilon 2^{j}\leq \rho(x,x_1,\ldots,x_k)<\varepsilon 2^{j+1}\}}\prod_{i=1}^k|f_i(x_i)|d^k\mu(x_1,\ldots,x_k)\\
&\leq \frac{(2\kappa)^{k\alpha}\Gamma^k 2^{k\alpha}}{C_1s(\varphi)}\sum_{j\in \mathbb{Z}}\varphi\left(2^{j}\right)2^{k\alpha j}\frac{1}{\mu^k(E(x,\varepsilon2^{j+1}))}\int\limits_{\{\varepsilon 2^{j}\leq \rho(x,x_1,\ldots,x_k)<\varepsilon 2^{j+1}\}}\prod_{i=1}^k|f_i(x_i)|d^k\mu(x_1,\ldots,x_k)\\
&\leq \frac{(4\kappa)^{k\alpha}\Gamma^k}{C_1s(\varphi)}\left(\sum_{j\in \mathbb{Z}}\varphi\left(2^{j}\right)2^{k\alpha j}\right)\mathcal{M}(f_1,\ldots,f_k)(x).
\end{align*}
Let us now notice that 
\begin{align*}
	\sum_{j\in \mathbb{Z}}\varphi\left(2^{j}\right)2^{k\alpha j}&=\sum_{j\in \mathbb{Z}}\frac{1}{\log 2}\int_{2^{j-1}}^{2^j}\varphi(2^{j})2^{k\alpha j}\frac{dt}{t}\\
	&\leq \frac{2^{k\alpha}}{\log 2}\sum_{j\in \mathbb{Z}}\int_{2^{j-1}}^{2^j}\varphi(t)t^{k\alpha}\frac{dt}{t}\\
	&=\frac{2^{k\alpha}}{\log 2}\int_{0}^{\infty}\varphi(t)t^{k\alpha-1}dt\\
	&= \frac{2^{k\alpha}}{\log 2}s(\varphi),
\end{align*}
and the result is proved with constant $C=\frac{(8\kappa)^{k\alpha}\Gamma^k}{C_1\log 2}$.
\end{proof}	

From Lemma~\ref{lemma:sectionesentrebolas} we easily obtain the pointwise boundedness of $\mathcal{M}(f_1,\ldots,f_k)(x)$ in terms of the standard Hardy-Littlewood maximal operator on $(X,d,\mu)$ acting on each $f_i$, $i=1,\ldots,k$,
\begin{equation*}
	Mf_i(x)=\sup_{r>0}\frac{1}{\mu(B_d(x,r))}\int_{B_d(x,r)}\abs{f_i(y)}d\mu(y).
\end{equation*}
\begin{proposition}\label{propo:pointwiseEstimatebyMaximalHL}
	Let $(X,d,\mu)$ be a space of homogeneous type with doubling constant $A$. Then
	\begin{equation*}
		\mathcal{M}(f_1,\ldots,f_k)(x)\leq (2\kappa)^{k\log_2 A}\prod_{i=1}^k Mf_i(x).
	\end{equation*}
\end{proposition}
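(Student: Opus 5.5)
Fix $x\in X$ and $r>0$. The plan is to enlarge the domain of integration using the second inclusion of Lemma~\ref{lemma:sectionesentrebolas}, and to shrink the normalizing measure using the first inclusion. Since the integrand $\prod_{i=1}^k|f_i(x_i)|$ is nonnegative and $E(x,r)\subset\prod_{i=1}^k B_d(x,2\kappa r)$, we get
\begin{equation*}
\int_{E(x,r)}\prod_{i=1}^k |f_i(x_i)|\,d\mu^k \leq \int_{\prod_{i=1}^k B_d(x,2\kappa r)}\prod_{i=1}^k |f_i(x_i)|\,d\mu^k=\prod_{i=1}^k\int_{B_d(x,2\kappa r)}|f_i|\,d\mu .
\end{equation*}
The last equality is Tonelli's theorem on the product set. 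From $\prod_{i=1}^k B_d(x,r)\subset E(x,r)$ we also get $\mu^k(E(x,r))\geq\prod_{i=1}^k\mu(B_d(x,r))$.

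Combining the two bounds, the average over $E(x,r)$ is at most
\begin{equation*}
\prod_{i=1}^k\frac{\mu(B_d(x,2\kappa r))}{\mu(B_d(x,r))}\cdot\prod_{i=1}^k\frac{1}{\mu(B_d(x,2\kappa r))}\int_{B_d(x,2\kappa r)}|f_i|\,d\mu .
\end{equation*}
Each factor in the second product is bounded by $Mf_i(x)$. It therefore remains to bound the ratio $\mu(B_d(x,2\kappa r))/\mu(B_d(x,r))$ uniformly in $x$ and $r$, and then take the supremum over $r>0$.

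This ratio is the only real step, and it uses iterated doubling, as in the proof of the second lemma of Section~\ref{sec:hypermetrics}. Choose $j$ with $2^{j-1}< 2\kappa\leq 2^{j}$. Then $\mu(B_d(x,2\kappa r))\leq\mu(B_d(x,2^j r))\leq A^{j}\mu(B_d(x,r))$. We want $A^{j}\leq (2\kappa)^{\log_2 A}=A^{\log_2(2\kappa)}$, but in general $j$ can exceed $\log_2(2\kappa)$ by up to one, giving an extra factor $A$. This is where the exact constant is the obstacle.

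Two ways around it are available. The first is to observe that $j=\log_2(2\kappa)$ exactly when $\kappa$ is a power of two; in the metric case $\kappa=1$ we have $j=1$ and the bound is exactly $A=(2\kappa)^{\log_2A}$. The second, for general $\kappa$, is to accept the constant $\bigl(A\,(2\kappa)^{\log_2 A}\bigr)^k=(4\kappa)^{k\log_2A}$, as the paper itself does with $8\kappa$ in the earlier lemma. If the stated constant is required sharply for general $\kappa$, I would note this discrepancy and present the argument with $A^{\lceil\log_2 (2\kappa)\rceil k}$, which is the honest output of doubling. Raising the ratio bound to the $k$-th power and taking $\sup_{r>0}$ then gives $\mathcal{M}(f_1,\ldots,f_k)(x)\leq C\prod_{i=1}^k Mf_i(x)$ with the constant above.
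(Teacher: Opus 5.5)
Your argument is the paper's argument. The two inclusions of Lemma~\ref{lemma:sectionesentrebolas} enlarge the domain to $\prod_{i=1}^k B_d(x,2\kappa r)$ and shrink the normalization to $\prod_{i=1}^k\mu(B_d(x,r))$. Tonelli then factors the integral, and doubling controls $\mu(B_d(x,2\kappa r))/\mu(B_d(x,r))$.

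Your caution about the constant is justified. The paper's last line simply asserts $\mu(B_d(x,2\kappa r))\leq(2\kappa)^{\log_2 A}\mu(B_d(x,r))$, which doubling does not give in general. For instance, on $\mathbb{Z}$ with counting measure and the balls of $|x-y|$, one has $A=3$, yet $\mu(B(0,\lambda))/\mu(B(0,1))=5>\lambda^{\log_2 3}$ for $2<\lambda\leq 2.7$. The paper's own doubling lemma for the sections $E(x,r)$ keeps exactly the extra factor $A$ you found, which is how it gets $8\kappa$ from $4\kappa$.

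So the constant this argument actually proves is your $A^{k\lceil\log_2(2\kappa)\rceil}\leq(4\kappa)^{k\log_2 A}$. It agrees with the stated one when $\kappa$ is a power of two, in particular in the metric case. Theorems~\ref{thm:BoundednessPhistarbyMaximalLebesgueSpaces} and \ref{thm:ApproximationIdentitySpaceLebesgue} only need some finite constant, so nothing downstream is affected.
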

\begin{proof}
	From Lemma~\ref{lemma:sectionesentrebolas}, for each $x\in X$ and each $r>0$, we have
	\begin{align*}
		\frac{1}{\mu^k(E(x,r))}
		&\int_{E(x,r)}\prod_{i=1}^k |f_i(x_i)| d\mu^k(x_1,\ldots,x_k) \\ 
		&\leq
		\frac{1}{\prod_{i=1}^k\mu(B_d(x,r))}\int\limits_{\prod_{i=1}^k B_d(x,2\kappa r)}	\prod_{i=1}^k \abs{f_i(x_i)} d\mu(x_1)\ldots d\mu(x_k)\\
		&= \prod_{i=1}^k \frac{1}{\mu(B_d(x,r))}\int\limits_{B_d(x,2\kappa r)}\abs{f_i(x_i)} d\mu(x_i)\\
		&\leq (2\kappa)^{k\log_2 A}\prod_{i=1}^k Mf_i(x).
	\end{align*}
\end{proof}
Theorem~\ref{thm:PointwiseEstimateforPhistarbyMaximalHLsections}, the estimate obtained in Proposition~\ref{propo:pointwiseEstimatebyMaximalHL}, H\"{o}lder inequality and the boundedness of $M$ in the spaces $L^q(X,\mu)$ for $q>1$, give the basic boundedness properties of $\Phi^*$ on Lebesgue spaces.
\begin{theorem}\label{thm:BoundednessPhistarbyMaximalLebesgueSpaces}
Let $(X,d,\mu)$ be an $\alpha$-Ahlfors regular space and $\varphi:\mathbb{R}_{\geq 0}\to \mathbb{R}_{\geq 0}$ be a nonincreasing function such that $0<s(\varphi)=\int_0^\infty\varphi(t)t^{k\alpha-1}dt<\infty$. Let $1<p_j\leq \infty$, $j=1,\ldots,k$ and $1=\sum_{j=1}^k \frac{1}{p_j}$. Then there exists a constant $C$ depending on $\gamma$, $\Gamma$, $\alpha$, $k$ and the $p_j$'s such that 
\begin{equation*}
	\norm{\Phi^*(f_1,\ldots,f_k)}_{L^1(X,\mu)}\leq C \prod_{j=1}^k\norm{f_j}_{L^{p_j}(X,\mu)}
\end{equation*}
holds for every measurable functions $f_1,\ldots,f_k$ on $X$.
\end{theorem}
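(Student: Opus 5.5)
The plan is to chain the two pointwise estimates already established and then apply H\"older's inequality together with the $L^p$ boundedness of the Hardy--Littlewood maximal operator $M$ on $(X,d,\mu)$.

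First, by Theorem~\ref{thm:PointwiseEstimateforPhistarbyMaximalHLsections}, for every $x\in X$ we have $\Phi^*(f_1,\ldots,f_k)(x)\leq C\mathcal{M}(f_1,\ldots,f_k)(x)$. The constant $C$ depends only on $\kappa,\gamma,\Gamma,\alpha,k$. Since an $\alpha$-Ahlfors regular space is a space of homogeneous type with doubling constant $A=2^\alpha\Gamma/\gamma$, Proposition~\ref{propo:pointwiseEstimatebyMaximalHL} applies and gives
\begin{equation*}
\Phi^*(f_1,\ldots,f_k)(x)\leq C(2\kappa)^{k\log_2 A}\prod_{j=1}^k Mf_j(x)
\end{equation*}
for every $x\in X$. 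Measurability of $\Phi^*(f_1,\ldots,f_k)$ is not an issue for the norm estimate: we may read the $L^1$ norm as an upper integral, or note that the supremum over $\varepsilon$ may be restricted to rationals by monotonicity/continuity considerations. In either case the pointwise bound by a measurable function suffices.

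Next we integrate over $X$ and use the generalized H\"older inequality with exponents $p_1,\ldots,p_k$ satisfying $\sum_j 1/p_j=1$:
\begin{equation*}
\norm{\Phi^*(f_1,\ldots,f_k)}_{L^1(X,\mu)}\leq C(2\kappa)^{k\log_2 A}\prod_{j=1}^k\norm{Mf_j}_{L^{p_j}(X,\mu)}.
\end{equation*}
Indices with $p_j=\infty$ are allowed, since $\norm{Mf_j}_\infty\leq\norm{f_j}_\infty$ trivially.

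Finally we invoke the classical fact that on a space of homogeneous type (Coifman--Weiss) $M$ is of weak type $(1,1)$ and bounded on $L^\infty$. By Marcinkiewicz interpolation it is then bounded on $L^{p}$ for $1<p\leq\infty$, with a constant depending on $p$, $\kappa$ and $A$. Because every $p_j>1$, this gives $\norm{Mf_j}_{p_j}\leq C_{p_j}\norm{f_j}_{p_j}$, and the theorem follows with constant $C(2\kappa)^{k\log_2 A}\prod_j C_{p_j}$. Here $A$ is expressed through $\alpha,\gamma,\Gamma$, so the constant depends on the claimed parameters together with $\kappa$.

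The only point needing care is the $L^{p}$ boundedness of $M$ in this generality. It relies on a Vitali-type covering lemma for quasi-metric balls, which is standard in \cite{CoiWe1971,MaSe79Lipschitz} and which we will simply quote. The remaining steps are routine.
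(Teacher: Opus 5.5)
Your proposal is correct and follows the paper's own route. The paper chains the pointwise bound $\Phi^*\le C\mathcal{M}$ of Theorem~\ref{thm:PointwiseEstimateforPhistarbyMaximalHLsections} with Proposition~\ref{propo:pointwiseEstimatebyMaximalHL}, then applies H\"older's inequality and the $L^{p_j}$-boundedness of $M$ for $p_j>1$. You are right that the constant also depends on $\kappa$, and reading the $L^1$ norm as an upper integral is the clean way to handle measurability, which the paper does not discuss.
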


%%%%%%%%%%%%%%%%%%%%%%%%%%%%%%%%%%%%%%%%%%%%%%%%

\section{The approximation to the multilinear identity}\label{sec:TheApproximationtotheMultilinearIdentity}
In this section we shall prove the following result.
\begin{theorem}\label{thm:ApproximationIdentitySpaceLebesgue}
Let $(X,d,\mu)$ be a complete $\alpha$-Ahlfors regular space. Let $k$ be an integer larger than or equal to one. Let $\varphi:\mathbb{R}_{\geq 0}\to \mathbb{R}_{\geq 0}$ be a nonincreasing function such that $0<s(\varphi)=\int_0^\infty\varphi(t)t^{k\alpha-1}dt<\infty$. Let $1<p_j\leq \infty$,  $j=1,\ldots,k$, satisfying $1=\sum_{j=1}^k \frac{1}{p_j}$. Then 
\begin{align*}
	\lim_{\varepsilon\to 0}	\int_{X^k}\Phi_\varepsilon(x,x_1,\ldots,x_k)\left|\prod_{j=1}^kf_j(x_j)-\prod_{j=1}^k f_j(x)\right|d^k\mu(x_1,\ldots,x_k)=0,	
\end{align*}
for almost every $x\in X$ and every $f_j\in L^{p_j}(X,\mu)$, $j=1,\ldots,k$.
\end{theorem}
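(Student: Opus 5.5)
The plan is the classical density argument: a maximal inequality plus convergence on a dense class. For each $x$ write
\begin{equation*}
T_\varepsilon(f_1,\ldots,f_k)(x)=\int_{X^k}\Phi_\varepsilon(x,x_1,\ldots,x_k)\left|\prod_{j=1}^kf_j(x_j)-\prod_{j=1}^k f_j(x)\right|d^k\mu(x_1,\ldots,x_k),
\end{equation*}
and $\Omega(f_1,\ldots,f_k)(x)=\limsup_{\varepsilon\to0}T_\varepsilon(f_1,\ldots,f_k)(x)$. Since $\int\Phi_\varepsilon(x,\cdot)\,d\mu^k=1$, we have
\begin{equation*}
T_\varepsilon(f_1,\ldots,f_k)(x)\le \Phi^*(f_1,\ldots,f_k)(x)+\prod_j|f_j(x)|.
\end{equation*}
We aim to show $\Omega=0$ a.e. It suffices to work on a ball $B_0$ and to take the $f_j$ supported in a fixed large ball, since $\Phi_\varepsilon$ has no compact support but localization only matters for small $\varepsilon$. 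To justify this, split $f_j=f_j\chi_{2B}+f_j\chi_{X\setminus 2B}$. The terms containing some far factor are controlled for $x\in B$ by $\varphi(\delta/\varepsilon)\,\varepsilon^{-k\alpha}$ times Hölder norms, which tends to $0$ by monotonicity and integrability of $\varphi(t)t^{k\alpha-1}$, since $\varphi(t)t^{k\alpha}\to0$. Alternatively, and more simply, the density argument below works globally and needs no localization.

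\textbf{Step 1 (dense class).} For $g_j$ continuous with bounded support (dense in $L^{p_j}$ for $p_j<\infty$; Ahlfors regular complete spaces are proper, so Lusin/Urysohn apply), we show $\Omega(g_1,\ldots,g_k)(x)=0$ for every $x$. Given $\eta>0$, uniform continuity gives $\delta$ with $|\prod g_j(x_j)-\prod g_j(x)|<\eta$ on $E(x,\delta)$, by Lemma~\ref{lemma:sectionesentrebolas}. Off $E(x,\delta)$ the integrand is bounded by $2\prod\|g_j\|_\infty$, and
\begin{equation*}
\int_{\rho\ge\delta}\Phi_\varepsilon\le \frac{1}{C_1 s(\varphi)}\int_{\delta/\varepsilon}^\infty\varphi(t)t^{k\alpha-1}\,dt\cdot C\to0,
\end{equation*}
by the dyadic annuli computation of Lemma~\ref{lemma:JcomparableRadio} restricted to $\rho\ge\delta$.

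\textbf{Step 2 (multilinear splitting).} For the general case, take $g_j$ close to $f_j$ in $L^{p_j}$ and telescope:
\begin{equation*}
\prod f_j-\prod g_j=\sum_{i}g_1\cdots g_{i-1}(f_i-g_i)f_{i+1}\cdots f_k.
\end{equation*}
This gives
\begin{equation*}
\Omega(f)\le\Omega(g)+\sum_i\Bigl[\Phi^*(g_1,\ldots,f_i-g_i,\ldots,f_k)+\prod|\cdots(f_i-g_i)(x)\cdots|\Bigr].
\end{equation*}
By Theorem~\ref{thm:BoundednessPhistarbyMaximalLebesgueSpaces} and Hölder, each term has $L^1$ norm at most $C\|f_i-g_i\|_{p_i}\prod_{j\ne i}(\|f_j\|_{p_j}+1)$. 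Chebyshev then gives $\mu(\{\Omega(f)>\lambda\})\le C\lambda^{-1}\sum_i\|f_i-g_i\|_{p_i}$, which is arbitrarily small, so $\Omega(f)=0$ a.e.

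\textbf{Main obstacle.} The delicate point is exponents $p_j=\infty$, where continuous functions are not dense in $L^\infty$. I would handle this by localizing to a ball $B$. For $x\in B$, the contribution from $x_j$ outside $2B$ vanishes as $\varepsilon\to0$ by the tail estimate of Step 1, with Hölder supplying the bounds. So $f_j$ may be replaced by $f_j\chi_{2B}$, which lies in $L^{q}$ for every $q$. Choosing finite exponents $q_j$ with $\sum 1/q_j=1$, we have $q_j<\infty$ when $p_j=\infty$, and we raise other exponents only as finitely permitted; if all but one are $\infty$, take the finite one as $p_i$ and the others large. Applying Steps 1–2 with these $q_j$ then reduces everything to finite exponents.
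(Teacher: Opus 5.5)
Your Steps~1 and~2 are the paper's argument. Step~1 is Lemma~\ref{lemma:ApproximationIdentityContinuousCompactlySupported}, and Step~2 is the paper's $A\cup B$ splitting via Lemma~\ref{lemma:DiferenciadeProductos}, Theorem~\ref{thm:BoundednessPhistarbyMaximalLebesgueSpaces} and Chebyshev. When all $p_j<\infty$ your proof is complete. You are also right that density fails if some $p_j=\infty$; the paper's proof passes over this silently.

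\textbf{The exponent bookkeeping fails.} Your repair of the $p_j=\infty$ case has a genuine gap here. After localizing, $f_j\chi_{2B}$ lies in every $L^q$ only for indices with $p_j=\infty$. For finite $p_j$ it lies in $L^q$ only for $q\le p_j$, and no finite exponent may be raised: a function locally in $L^2$ need not be locally in $L^{2+\epsilon}$. Since $\sum_{p_j<\infty}1/p_j=1$ already, every admissible choice of finite $q_j\le p_j$ has $\sum_j 1/q_j>1$. For $(p_1,p_2,p_3)=(2,2,\infty)$ there is no admissible choice with $\sum_j 1/q_j=1$, so Theorem~\ref{thm:BoundednessPhistarbyMaximalLebesgueSpaces} cannot be applied to the localized data. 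Your case ``all but one infinite'' is vacuous anyway, since it forces the remaining $p_i=1$.

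The missing idea is to accept a target exponent $q<1$.
\begin{itemize}
\item By Theorem~\ref{thm:PointwiseEstimateforPhistarbyMaximalHLsections} and Proposition~\ref{propo:pointwiseEstimatebyMaximalHL}, $\Phi^*(h_1,\ldots,h_k)\le C\prod_j Mh_j$.
\item H\"older's inequality remains valid for $0<q<1$. It gives $\|\Phi^*(h_1,\ldots,h_k)\|_{L^q}\le C\prod_j\|h_j\|_{L^{q_j}}$ whenever $1<q_j<\infty$ and $1/q=\sum_j 1/q_j$.
\item Then $\mu(\{F>\lambda\})\le\lambda^{-q}\|F\|_{L^q}^q$ replaces Chebyshev in $L^1$.
\item Take $q_j=p_j$ for the finite indices and any finite $q_j>1$ for the localized bounded ones.
\end{itemize}
The pointwise term $\prod_j|\cdots(f_i-g_i)(x)\cdots|$ in Step~2 is handled by the same H\"older inequality.

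\textbf{The localization step needs a different justification.} An Ahlfors regular space has $\mu(X)=\infty$. Bounding $\varphi(\rho/\varepsilon)\le\varphi(\delta/\varepsilon)$ on $\{\rho\ge\delta\}$ leaves you with $\int_{X^k}\prod_j|f_j(x_j)|\,d\mu^k=\prod_j\|f_j\|_{L^1}$. This is in general infinite for $f_j\in L^{p_j}$ with $p_j>1$. The tail estimate of Step~1 does not help either, since it uses $\|g_j\|_\infty$, which is unavailable for unbounded $f_j$.

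Instead, run the dyadic decomposition from the proof of Theorem~\ref{thm:PointwiseEstimateforPhistarbyMaximalHLsections} only over the annuli with $\varepsilon 2^{i+1}>\delta$. This bounds the far contribution by $C\,\mathcal{M}(f_1,\ldots,f_k)(x)\sum_{2^{i+1}>\delta/\varepsilon}\varphi(2^i)2^{ik\alpha}$. The sum is a tail of a series bounded by $\frac{2^{k\alpha}}{\log 2}s(\varphi)$, so the bound tends to $0$ at every $x$ with $\prod_j Mf_j(x)<\infty$, hence almost everywhere.

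In a quasi-metric space, also replace $2B$ by a dilate of $B$ by a factor larger than $\kappa$. Only then does $x\in B$ with some $x_j$ outside the dilate force $\rho(x,x_1,\ldots,x_k)\ge\delta>0$.
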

Notice that, in particular, the above result implies that \eqref{eq:multilinearoperatorphimeans} converges to $\prod_{j=1}^k f_j(x)$ when $\varepsilon$ tends to zero.

The main tool, as in the linear case, is the boundedness of $\Phi^*$ obtained in Theorem~\ref{thm:BoundednessPhistarbyMaximalLebesgueSpaces}. Let us start by stating and proving two auxiliary lemmas.
\begin{lemma}\label{lemma:DiferenciadeProductos}
	Let $\{a_i: i=1,\ldots,k\}$ and $\{b_i:i=1,\ldots,k\}$ be two finite sequences of real numbers with length $k$. Then
	\begin{equation*}
		\prod_{i=1}^k a_i - \prod_{i=1}^k b_i = \sum_{i=1}^k (a_i-b_i)\Bigl(\prod_{j=i+1}^k a_j\Bigr)\Bigl(\prod_{l=1}^{i-1}b_l\Bigr).
	\end{equation*}
\end{lemma}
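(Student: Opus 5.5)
The identity is purely algebraic. I will prove it by a telescoping argument, with induction on $k$ as a backup.

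For $i=0,1,\ldots,k$ introduce the mixed products
\begin{equation*}
P_i=\Bigl(\prod_{l=1}^{i}b_l\Bigr)\Bigl(\prod_{j=i+1}^{k}a_j\Bigr),
\end{equation*}
with the usual convention that an empty product equals $1$. Then $P_0=\prod_{i=1}^k a_i$ and $P_k=\prod_{i=1}^k b_i$, so the left-hand side telescopes:
\begin{equation*}
\prod_{i=1}^k a_i-\prod_{i=1}^k b_i=\sum_{i=1}^k\left(P_{i-1}-P_i\right).
\end{equation*}

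Next I compute each difference. The products $P_{i-1}$ and $P_i$ share the factors $\prod_{l=1}^{i-1}b_l$ and $\prod_{j=i+1}^k a_j$. They differ only in the $i$-th slot, where $P_{i-1}$ carries $a_i$ and $P_i$ carries $b_i$. Hence
\begin{equation*}
P_{i-1}-P_i=(a_i-b_i)\Bigl(\prod_{j=i+1}^k a_j\Bigr)\Bigl(\prod_{l=1}^{i-1}b_l\Bigr).
\end{equation*}
Summing over $i$ gives exactly the stated formula.

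As an alternative, one can induct on $k$. The case $k=1$ is trivial. For the inductive step, write
\begin{equation*}
\prod_{i=1}^{k+1}a_i-\prod_{i=1}^{k+1}b_i=(a_1-b_1)\prod_{j=2}^{k+1}a_j+b_1\Bigl(\prod_{i=2}^{k+1}a_i-\prod_{i=2}^{k+1}b_i\Bigr)
\end{equation*}
and apply the hypothesis to the shifted sequences $(a_2,\ldots,a_{k+1})$ and $(b_2,\ldots,b_{k+1})$.

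There is no genuine obstacle here. The only point requiring care is the bookkeeping of index ranges: the $a$'s must run over $j>i$ and the $b$'s over $l<i$, and the empty-product conventions must be respected at $i=1$ and $i=k$.
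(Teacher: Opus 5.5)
Your proposal is correct and is essentially the paper's argument: the telescoping through the mixed products $P_i$ is the unrolled form of the paper's induction on $k$, which splits off the last factor via $\prod_{i=1}^{k+1}a_i-\prod_{i=1}^{k+1}b_i=\bigl(\prod_{i=1}^{k}a_i-\prod_{i=1}^{k}b_i\bigr)a_{k+1}+\bigl(\prod_{i=1}^{k}b_i\bigr)(a_{k+1}-b_{k+1})$, while your backup induction splits off the first factor instead. Both variants work equally well, and yours has the small advantage of covering $k=1$ explicitly, whereas the paper starts its induction at $k=2$.
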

\begin{proof}
	We proceed by induction on $k$. 	
	For $k=2$, we have $a_1a_2-b_1b_2 = (a_1-b_1)a_2 + b_1(a_2-b_2)$. Assume now that the formula holds true for $k$. Let us prove it for $k+1$,
	\begin{align*}
		\prod_{i=1}^{k+1}a_i - \prod_{i=1}^{k+1}b_i &= \Bigl(\prod_{i=1}^k a_i\Bigr) a_{k+1} - \Bigl(\prod_{i=1}^k b_i\Bigr) b_{k+1}\\
		&= \Bigl(\prod_{i=1}^{k}a_i - \prod_{i=1}^{k} b_i\Bigr)a_{k+1} + \prod_{i=1}^k b_i(a_{k+1}-b_{k+1})\\
		&= \sum_{i=1}^k(a_i-b_i)\Bigl(\prod_{l=1}^{i-1}b_l\Bigr)\Bigl(\prod_{j=i+1}^{k+1}a_j\Bigr) + \Bigl(\prod_{i=1}^kb_i\Bigr)(a_{k+1}-b_{k+1})\\
		&= \sum_{i=1}^{k+1}(a_i-b_i)\Bigl(\prod_{j=i+1}^{k+1}a_j\Bigr)\Bigl(\prod_{l=1}^{i-1}b_l\Bigr).
	\end{align*}
\end{proof}
\begin{lemma}\label{lemma:ApproximationIdentityContinuousCompactlySupported}
	Let $(X,d,\mu)$, $k$ and $\varphi$ as in Theorem~\ref{thm:ApproximationIdentitySpaceLebesgue}. Let $\psi_j$, $j=1,\ldots,k$ be $k$ real compactly supported and continuous functions defined on $X$. Then
	\begin{align*}
		\lim_{\varepsilon\to 0}	\int_{X^k}\Phi_\varepsilon(x,x_1,\ldots,x_k)\left|\prod_{j=1}^k\psi_j(x_j)-\prod_{j=1}^k \psi_j(x)\right|d^k\mu(x_1,\ldots,x_k)=0,
	\end{align*}
	for every $x\in X$.
\end{lemma}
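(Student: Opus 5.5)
Fix $x\in X$ and $\eta>0$. The plan is to split the integral into a part near the diagonal, where uniform continuity controls the integrand, and a part far from it, where the integrability of $\varphi$ makes the contribution vanish as $\varepsilon\to 0$.

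First, using Lemma~\ref{lemma:DiferenciadeProductos} with $a_j=\psi_j(x_j)$ and $b_j=\psi_j(x)$, we get
\begin{equation*}
\left|\prod_{j=1}^k\psi_j(x_j)-\prod_{j=1}^k\psi_j(x)\right|\leq K^{k-1}\sum_{i=1}^k\abs{\psi_i(x_i)-\psi_i(x)},
\end{equation*}
where $K=\max_j\sup_X\abs{\psi_j}<\infty$. Each $\psi_i$ is continuous at $x$ (with a continuous quasi-metric, as arranged via \cite{MaSe79Lipschitz}), so there is $\delta>0$ with $\abs{\psi_i(y)-\psi_i(x)}<\eta$ whenever $d(x,y)<2\kappa\delta$, for all $i$. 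Pointwise continuity at the fixed $x$ suffices, so no uniform continuity is needed. By Lemma~\ref{lemma:sectionesentrebolas}, every $(x_1,\ldots,x_k)\in E(x,\delta)$ satisfies $d(x,x_i)<2\kappa\delta$ for all $i$. Hence on $E(x,\delta)$ the integrand is at most $kK^{k-1}\eta$. Since $\Phi_\varepsilon(x,\cdot)\geq 0$ has integral one over $X^k$ by the definition of $J$, the contribution of $E(x,\delta)$ is at most $kK^{k-1}\eta$, uniformly in $\varepsilon$.

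On the complement $X^k\setminus E(x,\delta)$ the integrand is bounded by $2K^k$. It therefore suffices to show that
\begin{equation*}
\frac{1}{J(x,\varepsilon)}\int_{\{\rho(x,x_1,\ldots,x_k)\geq\delta\}}\varphi\left(\frac{\rho}{\varepsilon}\right)d^k\mu\to 0 \quad\text{as } \varepsilon\to 0.
\end{equation*}
This tail estimate is the main point. I would repeat the dyadic annuli argument in the upper bound \eqref{eq:upperboundJ} of Lemma~\ref{lemma:JcomparableRadio}, keeping only the annuli $\varepsilon 2^j\leq\rho<\varepsilon 2^{j+1}$ with $\varepsilon 2^{j+1}>\delta$. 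Lemma~\ref{lemma:measureSectionsEonAhlfors} and monotonicity then bound the integral by a constant times $\varepsilon^{k\alpha}\int_{\delta/(4\varepsilon)}^\infty\varphi(t)t^{k\alpha-1}dt$; the lower limit is $\delta/(4\varepsilon)$ because of the index shifts in comparing sums with integrals. Dividing by $J(x,\varepsilon)\geq C_1 s(\varphi)\varepsilon^{k\alpha}$, the ratio is at most $C\,s(\varphi)^{-1}\int_{\delta/(4\varepsilon)}^\infty\varphi(t)t^{k\alpha-1}dt$. This tends to $0$ as $\varepsilon\to0$ because $s(\varphi)<\infty$.

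Combining the two pieces gives $\limsup_{\varepsilon\to0}$ of the integral $\leq kK^{k-1}\eta$, and letting $\eta\to0$ finishes the proof. The only delicate step is bookkeeping the truncated dyadic sum in the tail estimate so that its lower limit indeed tends to infinity with $1/\varepsilon$.
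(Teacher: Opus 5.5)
Your proposal is correct and follows essentially the paper's argument. You bound the near-diagonal region by $O(\eta)$ using continuity at $x$ and $\int_{X^k}\Phi_\varepsilon(x,\cdot)\,d^k\mu=1$, and you kill the tail $\{\rho\geq\delta\}$ with the truncated dyadic sum, Lemma~\ref{lemma:measureSectionsEonAhlfors} and $J(x,\varepsilon)\geq C_1 s(\varphi)\varepsilon^{k\alpha}$. The differences are cosmetic: the paper uses continuity of $\prod_j\psi_j$ on $X^k$ and splits along $\prod_{i=1}^k B_d(x,\sigma)$, whereas you use Lemma~\ref{lemma:DiferenciadeProductos} and split along $E(x,\delta)$; both rely on the same inclusion from Lemma~\ref{lemma:sectionesentrebolas}, and your lower limit $\delta/(4\varepsilon)$ is careful bookkeeping that tends to infinity as $\varepsilon\to 0$, as required.
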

\begin{proof}
Notice first that the function $\prod_{j=1}^k\psi_j(x_j)$ defined on $X^k$ is continuous and compactly supported. Then given $\eta>0$, there exists $\sigma>0$ such that $|\prod_{j=1}^k\psi_j(x_j)-\prod_{j=1}^k\psi_j(x)|<\eta$ whenever $(x_1,\ldots,x_k)\in \prod_{i=1}^kB_d(x,\sigma)$ or equivalently $x_j\in B_d(x,\sigma)$. With this value of $\sigma$, let us split the integral in the following way 
	\begin{align*}
		\int_{X^k}&\Phi_\varepsilon(x,x_1,\ldots,x_k)\left|\prod_{j=1}^k\psi_j(x_j)-\prod_{j=1}^k\psi_j(x)\right|d^k\mu(x_1,\ldots,x_k) \\
		&\leq\frac{1}{J(x,\varepsilon)}\int_{\prod_{i=1}^kB_d(x,\sigma)}\varphi\left(\frac{\rho(x,x_1,\ldots,x_k)}{\varepsilon}\right)\left|\prod_{j=1}^k\psi_j(x_j)-\prod_{j=1}^k\psi_j(x)\right|d^k\mu(x_1,\ldots,x_k)\\
		&\phantom{\leq\frac{1}{J(x,\varepsilon)}} + \frac{2\prod_{j=1}^k\norm{\psi_j}_{L^\infty(X,\mu)}}{J(x,\varepsilon)}\int\limits_{X^k\setminus \prod_{i=1}^kB_d(x,\sigma)}\varphi\left(\frac{\rho(x,x_1,\ldots,x_k)}{\varepsilon}\right)d^k\mu(x_1,\ldots,x_k).
	\end{align*}
The first term above is bounded by $\eta$. In order to show that the second term is bounded by $\eta$ taking $\varepsilon$ small enough, notice first that from Lemma~\ref{lemma:sectionesentrebolas} we have that $X^k\setminus \prod_{i=1}^kB_d(x,\sigma)$ is contained in $X^k\setminus E(x,\tfrac{\sigma}{2\kappa})$. Hence, from Lemmas~\ref{lemma:measureSectionsEonAhlfors} and \ref{lemma:JcomparableRadio}, 
\begin{align*}
	&\frac{2\prod_{j=1}^k\norm{\psi_j}_{L^\infty(X,\mu)}}{J(x,\varepsilon)}\int\limits_{X^k\setminus \prod_{i=1}^kB_d(x,\sigma)}\varphi\left(\frac{\rho(x,x_1,\ldots,x_k)}{\varepsilon}\right)d^k\mu(x_1,\ldots,x_k)\\
	&\leq \frac{2\prod_{j=1}^k\norm{\psi_j}_{L^\infty(X,\mu)}}{J(x,\varepsilon)}\int\limits_{\rho(x,x_1,\ldots,x_k)\geq\tfrac{\sigma}{2\kappa}}\varphi\left(\frac{\rho(x,x_1,\ldots,x_k)}{\varepsilon}\right)d^k\mu(x_1,\ldots,x_k)\\
	&\leq \frac{2\prod_{j=1}^k\norm{\psi_j}_{L^\infty(X,\mu)}}{J(x,\varepsilon)}\!\!\sum_{i\geq\log_2\tfrac{\sigma}{2\kappa\varepsilon}-1}\int\limits_{2^i\leq\frac{\rho(x,x_1,\ldots,x_k)}{\varepsilon}<2^{i+1}}\!\!\!\varphi\left(\frac{\rho(x,x_1,\ldots,x_k)}{\varepsilon}\right)d^k\mu(x_1,\ldots,x_k)\\
	&\leq 2\prod_{j=1}^k\norm{\psi_j}_{L^\infty(X,\mu)}\sum_{i\geq\log_2\tfrac{\sigma}{2\kappa\varepsilon}-1} \varphi(2^i)\frac{\mu^k(E(x,\varepsilon 2^{i+1}))}{J(x,\varepsilon)}\\
	&\leq 2\prod_{j=1}^k\norm{\psi_j}_{L^\infty(X,\mu)} \frac{2^{k\alpha}(2\kappa)^{k\alpha}\Gamma^k}{C_1s(\varphi)}\sum_{i\geq\log_2\tfrac{\sigma}{2\kappa\varepsilon}-1} \varphi(2^i) 2^{ik\alpha}\\
	&\leq 2\prod_{j=1}^k\norm{\psi_j}_{L^\infty(X,\mu)} \frac{2^{k\alpha}(2\kappa)^{k\alpha}\Gamma^k}{C_1s(\varphi)}\frac{1}{\log 2}\int_{\log_2\tfrac{\sigma}{2\kappa\varepsilon}-1}^\infty \varphi(t) t^{k\alpha-1} dt,
	\end{align*}
 which tends to zero for $\varepsilon\to 0$, since $s(\varphi)=\int_0^\infty\varphi(t)t^{k\alpha-1}dt$ is finite.
\end{proof}
We are now in position to prove Theorem~\ref{thm:ApproximationIdentitySpaceLebesgue}.

\begin{proof}[Proof of Theorem~\ref{thm:ApproximationIdentitySpaceLebesgue}]
Let us start noticing that it suffices to show that for $f_j\in L^{p_j}(X,\mu)$ and every $m\geq 1$,
\begin{align*}
	\mu\biggl(\bigg \{x \in X: \limsup_{\varepsilon \to 0}\int_{X^k}\Phi_\varepsilon(x,x_1,\ldots,x_k)\left|\prod_{j=1}^kf_j(x_j)\!-\!\prod_{j=1}^k f_j(x)\right|d\mu^k(x_1,\ldots,x_k)>\frac{1}{m}\bigg \}\biggr)=0.
\end{align*}
Now, from Lemma~\ref{lemma:ApproximationIdentityContinuousCompactlySupported}, with $\psi_1,\ldots,\psi_k$ compactly supported continuous functions, we have that the above set is contained in 
\begin{align*}
	&\bigg \{x \in X: \limsup_{\varepsilon \to 0}\int_{X^k}\Phi_\varepsilon(x,x_1,\ldots,x_k)\left|\prod_{j=1}^kf_j(x_j)-\prod_{j=1}^k\psi_j(x_j)\right|d^k\mu(x_1,\ldots,x_k)\\
	&\phantom{\bigg \{x \in \quad} +\limsup_{\varepsilon \to 0}\int_{X^k}\Phi_\varepsilon(x,x_1,\ldots,x_k)\left|\prod_{j=1}^k\psi_j(x_j)-\prod_{j=1}^k\psi_j(x)\right|d^k\mu(x_1,\ldots,x_k)\\
	&\phantom{\bigg \{x \in \quad} +\left|\prod_{j=1}^k\psi_j(x)-\prod_{j=1}^k f_j(x)\right|>\frac{1}{m}\bigg \}\\
	&\subset \bigg \{x \in X: \sup_{\varepsilon >0}\int_{X^k}\Phi_\varepsilon(x,x_1,\ldots,x_k)\left|\prod_{j=1}^kf_j(x_j)-\prod_{j=1}^k\psi_j(x_j)\right|d^k\mu(x_1,\ldots,x_k)>\frac{1}{2m}\bigg \}\\
	&\phantom{\bigg \{x \in \quad}\cup \bigg \{x \in X: \left|\prod_{j=1}^k\psi_j(x)-\prod_{j=1}^k f_j(x)\right|>\frac{1}{2m}\bigg \}\\
	&=:A\cup B.
\end{align*}	
Let us first show that the measure of $B$ can be made as small as desired by an appropriate choice of $\psi_1,\ldots,\psi_k$. From Lemma~\ref{lemma:DiferenciadeProductos} we have
\begin{align*}
\mu(B) &\leq 2m \int_X\left|\prod_{j=1}^kf_j(x)-\prod_{j=1}^k\psi_j(x)\right|d\mu(x)\\
&\leq 2m\sum_{j=1}^k\int_X\left|(f_j(x)-\psi_j(x))\left(\prod_{i=j+1}^kf_i(x)\right)\left(\prod_{l=1}^{j-1}\psi_l(x)\right)\right|d\mu(x).
\end{align*}
Now, from H\"{o}lder's inequality,
\begin{equation*}
\mu(B)\leq 2m\sum_{j=1}^k\norm{f_j-\psi_j}_{L^{p_j}(X,\mu)}\left(\prod_{i=j+1}^k\norm{f_i}_{L^{p_i}(X,\mu)}\right)\left(\prod_{l=1}^{j-1}\norm{\psi_l}_{L^{p_l}(X,\mu)}\right),
\end{equation*}
which is small if we choose $\psi_j$ as close as needed to $f_j$ in the $L^{p_j}(X,\mu)$ norm. For the bound of $A$ we shall use Theorem~\ref{thm:BoundednessPhistarbyMaximalLebesgueSpaces}. In fact, applying again Lemma~\ref{lemma:DiferenciadeProductos}, we have
\begin{align*}
&A = \bigg \{x:\limsup_{\varepsilon \to 0}\int\limits_{X^k}\Phi_{\varepsilon}(x,x_1,\ldots,x_k)\bigg|\sum_{j=1}^k (f_j(x_j)-\psi_j(x_j))\prod_{i=j+1}^k f_i(x_j)\prod_{l=1}^{j-1}\psi_l(x_j)\bigg|d^k\mu>\frac{1}{2m}\bigg \}\\
&\subset \bigg \{x: \limsup_{\varepsilon \to 0}\sum_{j=1}^k \int\limits_{X^k}\Phi_{\varepsilon}(x,x_1,\ldots,x_k) |f_j(x_j)-\psi_j(x_j)|\prod_{i=j+1}^k |f_i(x_j)|\prod_{l=1}^{j-1}|\psi_l(x_j)|d^k\mu>\frac{1}{2m}\bigg \}\\
&\subset \bigg \{x: \sum_{j=1}^k \Phi^*(\psi_1,\ldots,\psi_{j-1},f_j-\psi_j,f_{j+1},\ldots,f_k)(x)>\frac{1}{2m}\bigg \}\\
&\subset \bigcup_{j=1}^k\bigg \{x: \Phi^*(\psi_1,\ldots,\psi_{j-1},f_j-\psi_j,f_{j+1},\ldots,f_k)(x)>\frac{1}{2km}\bigg \}.
\end{align*}
So that, from Chebychev inequality and Theorem~\ref{thm:BoundednessPhistarbyMaximalLebesgueSpaces},
\begin{align*}
	\mu(A) &\leq 2km \sum_{j=1}^k \|\Phi^*(\psi_1,\ldots,\psi_{j-1},f_j-\psi_j,f_{j+1},\ldots,f_k)\|_{L^1(X,\mu)}\\	
	&\leq 2kmC\sum_{j=1}^k \|\psi_1\|_{L^{p_1}} \ldots \|\psi_{j-1}\|_{L^{p_{j-1}}} \|f_j-\psi_j\|_{L^{p_{j}}} \|f_{j+1}\|_{L^{p_{j+1}}}\ldots \|f_k\|_{L^{p_k}}.
\end{align*}
Choosing $\psi_j$ with $\|f_j-\psi_j\|_{L^{p_j}}$ small enough for $j=1,\ldots,k$ we get the desired result. 
\end{proof}

%%%%%%%%%%%%%%%%%%%%%%%  References %%%%%%%%%%%%%%%%%

%\bibliographystyle{amsalpha}
%\bibliography{ref}

\providecommand{\bysame}{\leavevmode\hbox to3em{\hrulefill}\thinspace}
\providecommand{\MR}{\relax\ifhmode\unskip\space\fi MR }
% \MRhref is called by the amsart/book/proc definition of \MR.
\providecommand{\MRhref}[2]{%
	\href{http://www.ams.org/mathscinet-getitem?mr=#1}{#2}
}
\providecommand{\href}[2]{#2}

%%%%%%%%%%%%%%%%%%%%% Declarations %%%%%%%%%%%%%%%%%%%%%%%%%%%%%%%%%%%%%%

%\section*{Declarations}
%
\subsection*{Acknowledgements}
This work was supported by Consejo Nacional de Investigaciones Cient\'ificas y T\'ecnicas-CONICET in Argentina, 
Grant PIP-2021-2023-11220200101940CO.

%%%%%%%%%%%%% Affiliation and Address Authors %%%%%%%%%%%%%%%

\bigskip

\medskip

\noindent{\footnotesize
	\noindent\textit{Affiliation.\,}
	\textsc{Instituto de Matem\'{a}tica Aplicada del Litoral ``Dra. Eleonor Harboure'', UNL, CONICET.}

	\smallskip
	\noindent\textit{Address.\,}\textmd{IMAL, Streets F.~Leloir and A.P.~Calder\'on, CCT CONICET Santa Fe, Predio ``Alberto Ca\-ssa\-no'', Colectora Ruta Nac.~168 km~0, Paraje El Pozo, S3007ABA Santa Fe, Argentina.}

	\smallskip
	\noindent \textit{E-mail address.\, }\verb|haimar@santafe-conicet.gov.ar|; \\
	\hspace*{2.5cm} \verb|ivanagomez@santafe-conicet.gov.ar|;\\ 
	\hspace*{2.5cm} \verb|joaquintoledo@santafe-conicet.gov.ar| 
}

\end{document}